\renewcommand\section{\@startsection {section}{1}{\z@}%
                                   {-3.5ex \@plus -1ex \@minus -.2ex}%
                                   {2.3ex \@plus.2ex}%
                                   {\normalfont\Large\bfseries
                                    \setcounter{equation}{0}
                                    \setcounter{thm}{0}}}
\newtheorem{thm}{Theorem}
\newtheorem{lem}[thm]{Lemma}
\newtheorem{cor}[thm]{Corollary}
\newtheorem{defn}[thm]{Definition}
\renewenvironment{proof}{{\noindent\bfseries{Proof.}}}{\hfill\ensuremath{\Box}}
\newcommand{\q}{\quad}
\newcommand{\qq}{\qquad}
\newcommand{\CommaBin}{\mathbin{\raisebox{0.5ex}{,}}}
\newcommand{\s }{\\[0.2cm]}
\title{\textbf{Well-Posedness of some nonlinear Volterra-Fredholm integral and integro-dynamic equations on time scales}}
\author{\small \textbf{Alaa E.~Hamza}$^{1}$, \textbf{and Ahmed G.~Ghallab}$^{2}$  \\
\small$^\textbf{{1}}$ Department of Mathematics, Faculty of Science, Cairo University, Giza, Egypt.\\
\small E-mail: hamzaaeg2003@yahoo.com\\
\small$^\textbf{{2}}$ Department of Mathematics, Faculty of  Science,
 \small {Fayoum University, Fayoum, Egypt.}\\
 \small E-mail: agg00@fayoum.edu.eg}
\date{}
\begin{document}
\thispagestyle{empty} \maketitle
\begin{abstract}
In this paper we study well posedness of a certain nonlinear Volterra-Fredholm dynamic integral and integro-dynamic equations on unbounded interval from arbitrary time scale. We derive the time scale analogue of certain integral inequalities of Pachpatte type and using them with Banach's fixed-point theorem to establish the results.
\end{abstract}
\vskip 3mm \noindent Keywords: Pachpatte inequalities, time scales, integral equations, well posedness, Banach's fixed-point theorem.
\let\thefootnote\relax\footnote{The results of this article appeared in a preliminary form as a scientific poster in the proceedings of the first international conference, new horizons in basic and applied science (ICNHBAS), 21-23 September, 2013, Hurghada, Egypt.}
%-------------------------------------------------------------------------------------------------------------------------------
% Introduction
%-------------------------------------------------------------------------------------------------------------------------------

\section{Introduction}
 Recently, numerous mathematicians have investigated some of the qualitative and the quantitative properties of different types of integral equations on time scales by using different techniques, see \cite{D3, D5, Kul, D1, D2, D4}. Integral equations on time scales are thought to have a great potential for many applications in various areas of natural science and to give a deeper understanding than the traditional integral and summation equations to many phenomena, especially those that occur mutually in continuous-discrete manner with the flow of time, see \cite{wong}.
\newpage
In the present paper we shall consider the general nonlinear dynamic integral equation
\begin{equation}\label{e1}
x(t)=f\Big(t,x(t),\int_a^t h(t,s,x(s))\Delta s,\int_a^b g(t,s,x(s))\Delta s\Big),\q t\in I_\mathbb{T},
\end{equation}
and the integro-dynamic equation
\begin{equation}\label{e.2}
x^\Delta(t)=f\Big(t,x(t),\int_a^t h(t,s,x(s))\Delta s,\int_a^b g(t,s,x(s))\Delta s\Big),\q t\in I_\mathbb{T},
\end{equation}
where $a<b$, and $I_\mathbb{T}=[a,\infty)_\mathbb{T}:=[a,\infty)\cap \mathbb{T}$. A time scale $\mathbb{T}$ is a nonempty closed subset from the reals. We assume that $f:I_\mathbb{T}\times \mathbb{X}\times \mathbb{X}\times \mathbb{X}\rightarrow \mathbb{X}$, is rd-continuous in the first variable, while the functions $h:I_\mathbb{T}^2\times \mathbb{X}\rightarrow \mathbb{X}$, $g:I_\mathbb{T}^2\times \mathbb{X}\rightarrow \mathbb{X}$ are assumed to be rd-continuous in the second argument, and $x$ is the unknown function. Here $\mathbb{X}$ is a Banach space. The integral is the delta integral and $\Delta$ denotes the delta derivative, for more details see \cite{Boh1, Boh2}. By a solution of \eqref{e1} (resp.\ Eq.\eqref{e.2}) we mean rd-continuous function $x:I_\mathbb{T}\rightarrow\mathbb{X}$ that satisfies Eq.\eqref{e1} (resp.\ Eq.\eqref{e.2}).

 In this paper, we shall investigate the well-posedness of equations \eqref{e1} and \eqref{e.2}. We shall study existence and uniqueness of the solutions, besides continuous dependence on data and parameters.
 % A mathematical problem is said to be \textbf{well-posed} if the following are satisfied

%\begin{enumerate}
 % \item \textbf{Existence.} The problem should have a solution.
  %\item \textbf{Uniqueness.} The problem has at most one solution.
  %\item \textbf{Continuous dependence,} which means that the solution depends continuously on the data that are present in the problem.
%\end{enumerate}

Our methods involve using Banach's fixed-point theorem on an appropriate metric space and deriving the time scale analogue of certain integral inequalities of Pachpatte type to study some qualitative properties of equations \eqref{e1} and \eqref{e.2}. The results of this paper generalize some of the results in articles \cite{akram, D1, D2, D4} on $\mathbb{T}=\mathbb{R}$.

The work in this paper is organized as follows. Section $\mathbf{3}$ establishes the time scale analogue of some integral inequalities of Pachpatte type. In Section $\mathbf{4}$, we prove existence and uniqueness of solutions of equations \eqref{e1} and \eqref{e.2}, respectively. In Section $\mathbf{5}$, we establish estimates on the solutions of equations \eqref{e1} and \eqref{e.2}, respectively. Section $\mathbf{6}$ investigates the continuous dependence of the solutions of equations \eqref{e1} and \eqref{e.2} on the functions involved.
%-------------------------------------------------------------------------------------------------------------------------------
% Preliminaries
%-------------------------------------------------------------------------------------------------------------------------------

\section{Preliminaries}
In this section we introduce some definitions, notations, and preliminary results which will be used throughout this paper.

\begin{defn}
A time scale $\mathbb{T}$ is a nonempty closed subset of the real numbers $\mathbb{R}$.
\end{defn}

%The reals $\mathbb{R}$, the integers $\mathbb{Z}$, and the nonnegative integers $\mathbb{N}$ are examples of time scales.

\begin{defn}
The mappings $\sigma,\rho:\mathbb{T}\rightarrow \mathbb{T}$ defined by $\sigma(t)=\inf\{s\in \mathbb{T}: s>t\}$, and $\rho(t)=\sup\{s\in \mathbb{T}: s<t\}$ are called the jump operators.
\end{defn}
If $\mathbb{T}$ has a left scattered maximum $m$, then define $\mathbb{T}^\kappa=\mathbb{T}-\{m\}$, otherwise $\mathbb{T}^\kappa=\mathbb{T}$.

\begin{defn}
A function $f:\mathbb{T} \rightarrow \mathbb{X}$ is said to be delta differentiable at the point $t\in \mathbb{T}^\kappa$ if there exist an element $f^\Delta(t)\in \mathbb{X}$ with the property that given any $\epsilon >0$ there is a neighborhood $U$ of $t$ with $\|[f(\sigma(t))-f(s)]-f^\Delta(t)[\sigma(t)-s]\|\leq \varepsilon |\sigma(t)-s|$ for all $s\in U$. The function $f^\Delta(t)$ is the delta derivative of $f$ at $t$.
\end{defn}

For $\mathbb{T}=\mathbb{R}$, we have $f^\Delta(t)=f'(t)$, and for $\mathbb{T}=\mathbb{Z}$, we have $f^\Delta(t)=\Delta f(t)= f(t+1)-f(t)$.

\begin{defn}
A function $F:\mathbb{T}\rightarrow\mathbb{X}$ is called an \emph{antiderivative} of $f:\mathbb{T}\rightarrow\mathbb{X}$ provided $F^\Delta(t)=f(t)$ for all $t\in \mathbb{T}$. The $\Delta$-integral of $f$ is defined  by
$$
\int_r^s f(t)\Delta t= F(s)-F(r), \q \text{for all } r,s \in \mathbb{T}.
$$
\end{defn}

If $\mathbb{T}=\mathbb{R}$, then $\displaystyle\int_a^b f(s)\Delta s=\displaystyle\int_a^b f(s)ds$, while if $\mathbb{T}=\mathbb{Z}$, then $\displaystyle\int_a^b f(s)\Delta s=\displaystyle\sum_{s=a}^{b-1}f(s)$.

\begin{defn}
A function $f:\mathbb{T}\rightarrow\mathbb{X}$ is called  right-dense continuous (rd-continuous)  if $f$ is continuous at every right-dense point $t\in \mathbb{T}$ and the left-sided limits exist (i.e finite) at every left-dense point $t\in \mathbb{T}$. The family of all rd-continuous functions from $ \mathbb{T}$ to $\mathbb{X}$ is denoted by $ C_{rd}(\mathbb{T};\mathbb{X})$.
\end{defn}

The family  of all regressive functions is denoted  by
$$
\mathcal{R}:= \Big\{p\in C_{rd}(\mathbb{T};\mathbb{R}) \ \text {and } 1+p(t)\mu(t)\neq 0, \ \forall \ t\in \mathbb{T}\Big\}\CommaBin
$$
and the set of positively regressive functions is denoted by
$$
\mathcal{R}^+:= \Big\{p\in C_{rd}(\mathbb{T};\mathbb{R}) \ \text {and } 1+p(t)\mu(t)> 0, \ \forall \ t\in \mathbb{T}\Big\}\cdot
$$
\begin{defn}
If $p\in\mathcal{R}$, then we define the generalized exponential function by
$$
e_p(t,s)=\exp\Big( \int_s^t \xi_{\mu(\tau)}(p(\tau))\Delta \tau \Big)\q \text{for } \ t,s \in \mathbb{T},
$$
with the cylinder transformation
  \begin{equation*}
\xi_h (z)= \begin{cases}
   \displaystyle\frac{\log(1+hz)}{h} &\text{if } h\neq 0 \\[12pt]
z & \text{if } h=0.
\end{cases}
\end{equation*}
\end{defn}

In the case $\mathbb{T}=\mathbb{R}$, the exponential function is given by
$$
e_p(t,s)= \exp\Big ( \int _s^t p(\tau)d\tau \Big),
$$
for $s,t \in \mathbb{R}$, where $p:\mathbb{R}\rightarrow\mathbb{R}$ is a continuous function. In the case  $\mathbb{T}=\mathbb{Z}$, the exponential is given by
$$
e_p(t,s)=\prod_{\tau=s}^{t-1}[1+p(\tau)],
$$
for $s,t \in \mathbb{Z}$, where $p:\mathbb{Z}\rightarrow\mathbb{R}$, $ p(t)\neq -1$ for all $t\in \mathbb{Z}$.\\
For more basic properties of the generalized exponential function, see \cite{Boh1}.

\begin{defn}
For $k(x,y)\in \mathcal{R}$ with respect to the $y$, the generalized exponential function is defined by
$$
e_{k(x,.)}(t,s)=\exp(\int_s^t \xi_{\mu(\tau)}(k(x,\tau))\Delta \tau),
$$
for $s,t\in \mathbb{T}$.
\end{defn}

Let $\beta > 0$ be a constant and let $\|.\|$ denotes a norm on $\mathbb{X}$. We consider the space $C_\beta(I_\mathbb{T};\mathbb{X})$ of all rd-continuous functions such that
$$
\sup_{t\in I_\mathbb{T}}\frac{\|x(t)\|}{e_\beta(t,a)}<\infty,
$$
coupled  with a suitable norm, namely
$$
\|x\|_\beta^\infty=\sup_{t\in I_\mathbb{T}}\frac{\|x(t)\|}{e_\beta(t,a)}\cdot
$$
%and with a metric defined by
%$$
%d_\beta^\infty(x,y)=\sup_{t\in I_\mathbb{T}}\frac{\|x(t)-y(t)\|}{e_\beta(t,a)}\cdot
%$$
%Some important properties of $\|\cdot\|_\beta^\infty$ are listed in the following variant of lemma proved in \cite{Tis}.
We can follow the proof of Lemma 3.3 in \cite{Tis} to obtain the following result.
\begin{lem} If $\beta>0$ is a constant, then
%\item [\textbf{(i)}] $d_\beta^\infty$ is a metric;
%\item [\textbf{(ii)}] $\|\cdot\|_\beta^\infty$ is a norm; %and is equivalent to the sup-norm $\|\cdot\|_0$;
%\item[\textbf{(iii)}] $(C_\beta(I_\mathbb{T};\mathbb{X}),d_\beta^\infty)$ is a complete metric space;
$(C_\beta(I_\mathbb{T};\mathbb{X}),\|\cdot\|_\beta^\infty)$ is a Banach space.
\end{lem}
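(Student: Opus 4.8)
The plan is to check the normed-space axioms (positive definiteness, homogeneity, and the triangle inequality), which are routine consequences of the corresponding properties of $\|\cdot\|$ on $\mathbb{X}$ together with elementary properties of the supremum, and then to concentrate on completeness, which is the substantive point. The essential observation is that the weight $e_\beta(\cdot,a)$ is a positive, rd-continuous function with $e_\beta(t,a)\ge 1$ for every $t\in I_\mathbb{T}$: indeed $\beta>0$ forces $1+\beta\mu(t)>0$, so $\beta\in\mathcal R^+$, and the integrand $\xi_{\mu(\tau)}(\beta)$ is nonnegative, whence $e_\beta(t,a)=\exp\big(\int_a^t\xi_{\mu(\tau)}(\beta)\,\Delta\tau\big)\ge 1$. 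Consequently the map $x\mapsto y$, where $y(t)=x(t)/e_\beta(t,a)$, is a linear bijection between $C_\beta(I_\mathbb{T};\mathbb{X})$ and the space $B_{rd}(I_\mathbb{T};\mathbb{X})$ of bounded rd-continuous functions, and it is an isometry because $\|x\|_\beta^\infty=\sup_{t}\|y(t)\|=\|y\|_\infty$. Since $e_\beta(\cdot,a)$ is rd-continuous and bounded away from $0$, the function $y$ is rd-continuous exactly when $x$ is, so the bijection is well defined in both directions. Because of this isometry, completeness of $C_\beta(I_\mathbb{T};\mathbb{X})$ is equivalent to completeness of $(B_{rd}(I_\mathbb{T};\mathbb{X}),\|\cdot\|_\infty)$; I will nonetheless carry out the completeness argument directly on the sequence, reserving the isometry for the verification of rd-continuity of the limit.

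For completeness, let $(x_n)$ be a Cauchy sequence in $\|\cdot\|_\beta^\infty$. For each fixed $t$ the estimate $\|x_n(t)-x_m(t)\|\le e_\beta(t,a)\,\|x_n-x_m\|_\beta^\infty$ shows that $(x_n(t))$ is Cauchy in $\mathbb{X}$; as $\mathbb{X}$ is a Banach space it converges to some $x(t)\in\mathbb{X}$, which defines the candidate limit pointwise. To upgrade this to convergence in the weighted norm, fix $\varepsilon>0$ and choose $N$ with $\|x_n-x_m\|_\beta^\infty<\varepsilon$ for $n,m\ge N$; then $\|x_n(t)-x_m(t)\|/e_\beta(t,a)<\varepsilon$ for all $t$, and letting $m\to\infty$ yields $\|x_n(t)-x(t)\|/e_\beta(t,a)\le\varepsilon$ for all $t$ and all $n\ge N$. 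Taking the supremum over $t$ gives $\|x_n-x\|_\beta^\infty\le\varepsilon$, so $x_n\to x$ in the $\beta$-norm.

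It remains to verify that the limit $x$ lies in $C_\beta(I_\mathbb{T};\mathbb{X})$. Finiteness of the norm is immediate, since $\|x\|_\beta^\infty\le\|x-x_N\|_\beta^\infty+\|x_N\|_\beta^\infty<\infty$. The only genuinely delicate step---and the one I expect to be the main obstacle---is rd-continuity of $x$. Here I would pass to $y_n=x_n/e_\beta(\cdot,a)$: the computation above shows $y_n\to y:=x/e_\beta(\cdot,a)$ uniformly on $I_\mathbb{T}$, and each $y_n$ is rd-continuous. One then invokes the time-scale version of the classical theorem that a uniform limit of rd-continuous functions is rd-continuous; the standard $\varepsilon/3$ argument carries over, tested separately at right-dense points (continuity) and left-dense points (existence of finite left-sided limits). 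This makes $y$ rd-continuous, and since $e_\beta(\cdot,a)$ is rd-continuous, the product $x=y\cdot e_\beta(\cdot,a)$ is rd-continuous as well. With $x\in C_\beta(I_\mathbb{T};\mathbb{X})$ and $x_n\to x$ in $\|\cdot\|_\beta^\infty$, completeness follows, and hence the space is a Banach space.
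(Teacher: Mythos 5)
Your proof is correct and complete. The paper itself gives no argument for this lemma, deferring instead to Lemma 3.3 of Tisdell and Zaidi, whose proof is the same standard one you carry out: pointwise limits from completeness of $\mathbb{X}$, the upgrade to convergence in the weighted norm, and rd-continuity of the limit via the uniform-limit ($\varepsilon/3$) argument at right-dense and left-dense points; your observation that $\beta>0$ forces $e_\beta(t,a)\ge 1$ on $I_\mathbb{T}$ correctly underwrites the isometry with the bounded rd-continuous functions.
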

We use the following comparison lemma in our study, see \cite{Boh1}.
\begin{lem}\label{lem2.2}
Suppose $u,\beta\in C_{rd}(I_\mathbb{T}, \mathbb{R})$ and $\alpha\in \mathcal{R}^+$. If
$$
u^\Delta(t)\leq \alpha(t)u(t)+\beta(t),\q \forall\  t\in I_\mathbb{T},
$$
then
$$
u(t)\leq u(a)e_\alpha(t,a)+\int_{a}^te_\alpha(t,\sigma(\tau))\beta(\tau)\Delta \tau, \q \forall \ t\in I_\mathbb{T}.
$$
\end{lem}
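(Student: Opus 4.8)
The plan is to reduce the differential inequality to a monotonicity statement by dividing through by the strictly positive exponential $e_\alpha(\cdot,a)$ and exploiting the variation-of-constants structure on time scales. Because $\alpha\in\mathcal{R}^+$, the hypothesis $1+\mu(t)\alpha(t)>0$ guarantees $e_\alpha(t,a)>0$ for every $t\in I_\mathbb{T}$; this positivity is exactly what lets me multiply and divide by exponentials without reversing the inequality, and it is the one place where the \emph{positive} regressivity of $\alpha$ (as opposed to mere regressivity) is indispensable.

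First I would introduce $v(t):=u(t)\,e_{\ominus\alpha}(t,a)=u(t)/e_\alpha(t,a)$, where $\ominus\alpha=-\alpha/(1+\mu\alpha)$ denotes the inverse of $\alpha$ in the regressive group. Applying the time-scale product rule $(fg)^\Delta=f^\Delta g^\sigma+f\,g^\Delta$ together with $\bigl(e_{\ominus\alpha}(\cdot,a)\bigr)^\Delta=(\ominus\alpha)\,e_{\ominus\alpha}(\cdot,a)$, and inserting the assumed inequality $u^\Delta\le\alpha u+\beta$ (permissible since $e_{\ominus\alpha}(\sigma(t),a)>0$), I obtain
$$v^\Delta(t)\le\bigl(\alpha(t)u(t)+\beta(t)\bigr)\,e_{\ominus\alpha}(\sigma(t),a)+u(t)(\ominus\alpha)(t)\,e_{\ominus\alpha}(t,a).$$
The crucial computation is the cancellation of the two terms carrying $u(t)$: using $e_{\ominus\alpha}(\sigma(t),a)=\dfrac{1}{1+\mu(t)\alpha(t)}\,e_{\ominus\alpha}(t,a)$ and $(\ominus\alpha)(t)=-\dfrac{\alpha(t)}{1+\mu(t)\alpha(t)}$, the $u$-contributions sum to zero, leaving the clean estimate $v^\Delta(t)\le\beta(t)\,e_{\ominus\alpha}(\sigma(t),a)$.

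Next I would integrate this bound from $a$ to $t$. Since $v(a)=u(a)$ and $\int_a^t v^\Delta(\tau)\,\Delta\tau=v(t)-v(a)$, this yields $u(t)\,e_{\ominus\alpha}(t,a)\le u(a)+\int_a^t\beta(\tau)\,e_{\ominus\alpha}(\sigma(\tau),a)\,\Delta\tau$. Finally, multiplying by $e_\alpha(t,a)>0$ and applying the standard exponential identities $e_{\ominus\alpha}(t,a)=e_\alpha(a,t)$ and the semigroup law $e_\alpha(t,a)\,e_\alpha(a,\sigma(\tau))=e_\alpha(t,\sigma(\tau))$ converts the bound into precisely $u(t)\le u(a)\,e_\alpha(t,a)+\int_a^t e_\alpha(t,\sigma(\tau))\beta(\tau)\,\Delta\tau$, as claimed.

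The main obstacle I anticipate is purely the bookkeeping in the second step: one must be careful that the product rule produces the \emph{forward-shifted} factor $e_{\ominus\alpha}(\sigma(t),a)$ on the $u^\Delta$ term, and that the algebraic simplification $\dfrac{\alpha}{1+\mu\alpha}-\dfrac{\alpha}{1+\mu\alpha}=0$ is what makes the $u$-terms vanish. Everything else is a matter of invoking positivity of $e_\alpha$ and the elementary identities for the generalized exponential recorded in \cite{Boh1}.
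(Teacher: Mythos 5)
Your proof is correct: the change of variable $v=u\,e_{\ominus\alpha}(\cdot,a)$, the cancellation of the $u$-terms via $e_{\ominus\alpha}(\sigma(t),a)=\frac{1}{1+\mu(t)\alpha(t)}e_{\ominus\alpha}(t,a)$, and the final application of $e_{\ominus\alpha}(t,a)=e_\alpha(a,t)$ together with the semigroup law all check out, and the positivity of $e_\alpha$ coming from $\alpha\in\mathcal{R}^+$ is used exactly where it is needed. The paper itself gives no proof of this lemma (it simply cites Bohner and Peterson), and your argument is precisely the standard variation-of-constants proof found in that reference, so there is nothing to compare beyond noting that you have supplied the omitted details correctly.
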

%-------------------------------------------------------------------------------------------------------------------------------
% New Pachpatte type inequalities on time scales
%-------------------------------------------------------------------------------------------------------------------------------

\section{New Pachpatte type inequalities on time scales}
In this section, we establish the time scale analogue of the integral inequalities of Pachpatte type given in \cite{pach}.
\begin{thm}\label{th3.1}
Let the functions $k_i(\cdot,\cdot):I_\mathbb{T}\times I_\mathbb{T}\rightarrow\mathbb{R}_+$ ( for $i=1,2$ ) be nondecreasing in the first variable and rd-continuous in the second variable. Also, assume that $u\in C_{rd}(I_\mathbb{T},\mathbb{R}_+)$ and
\begin{equation}
u(t)\leq c+ \int_a^tk_1(t,s)u(s)\Delta s + \int_a^bk_2(t,s)u(s)\Delta s, \q t\in I_\mathbb{T},
\end{equation}
 where $c\geq0$ is a real constant. If
\begin{equation}\label{cond1}
  p=\int_a^bk_2(b,s)e_{k_1(s,.)}(s,a)\Delta s <1,\q  t\in I_\mathbb{T},
\end{equation}
then
\begin{equation}\label{ineq}
 u(t)\leq \frac{c}{1-p}e_{k_1(t,.)}(t,a),\q  t\in I_\mathbb{T}.
\end{equation}
\end{thm}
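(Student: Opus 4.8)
The plan is to collapse the Volterra--Fredholm inequality into a purely Volterra (Gronwall-type) one and then invoke the comparison Lemma~\ref{lem2.2}. Since the Fredholm integral $\int_a^b k_2(t,s)u(s)\Delta s$ does not involve the running upper limit, the first idea is to absorb it into an inhomogeneous term $B(t):=c+\int_a^b k_2(t,s)u(s)\Delta s$ and rewrite the hypothesis as $u(t)\le B(t)+\int_a^t k_1(t,s)u(s)\Delta s$. Two features block a direct application of Gronwall: the kernel $k_1(t,s)$ carries the free variable $t$, and $B(t)$ is itself $t$-dependent. Both are neutralized by the monotonicity of $k_1,k_2$ in the first variable.

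First I would freeze the upper limit. Fix $T\in I_\mathbb{T}$; for every $\tau\in[a,T]_\mathbb{T}$ monotonicity gives $k_1(\tau,s)\le k_1(T,s)$ and $B(\tau)\le B(T)$, so $u(\tau)\le B(T)+\int_a^\tau k_1(T,s)u(s)\Delta s$. Let $m(\tau)$ denote the right-hand side; then $u\le m$ and $m^\Delta(\tau)=k_1(T,\tau)u(\tau)\le k_1(T,\tau)m(\tau)$ with $m(a)=B(T)$. As $k_1(T,\cdot)$ is nonnegative and rd-continuous it lies in $\mathcal{R}^+$, so Lemma~\ref{lem2.2} (with the forcing term $\equiv 0$) yields $m(\tau)\le B(T)\,e_{k_1(T,\cdot)}(\tau,a)$. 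Evaluating at $\tau=T$ and letting $T$ range over $I_\mathbb{T}$ gives the intermediate bound
\[
u(t)\le\Big(c+\int_a^b k_2(t,s)u(s)\Delta s\Big)e_{k_1(t,\cdot)}(t,a),\qquad t\in I_\mathbb{T}.
\]

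The heart of the matter is the surviving Fredholm integral, which must be estimated against itself. I would substitute the intermediate bound into the constant $\lambda:=\int_a^b k_2(b,s)u(s)\Delta s$. For $s\in[a,b]_\mathbb{T}$ monotonicity over the integration window gives $\int_a^b k_2(s,\tau)u(\tau)\Delta\tau\le\lambda$, hence the bracket $c+\int_a^b k_2(s,\cdot)u\,\Delta$ is at most $c+\lambda$; factoring $c+\lambda$ out of the integral leaves exactly $p=\int_a^b k_2(b,s)e_{k_1(s,\cdot)}(s,a)\Delta s$. This produces the self-referential inequality $\lambda\le(c+\lambda)p$, and here the assumption $p<1$ is indispensable: solving gives $\lambda\le cp/(1-p)$, so $c+\lambda\le c/(1-p)$. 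Since for $s\in[a,b]_\mathbb{T}$ the intermediate bound now reads $u(s)\le\frac{c}{1-p}e_{k_1(s,\cdot)}(s,a)$, and since the Fredholm integral appearing there is controlled by $\lambda$, one recovers the claimed estimate $u(t)\le\frac{c}{1-p}e_{k_1(t,\cdot)}(t,a)$.

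The main obstacle is precisely this nonlocal closing step: unlike a pure Gronwall argument, the Fredholm term cannot be eliminated by differentiation, so one is forced to bound it in terms of itself and then untangle the result algebraically---which is exactly where $p<1$ manufactures the geometric-series constant $1/(1-p)$. A subtler point demanding care is the reduction of every $t$-dependent kernel to its value at the fixed endpoint $b$, the point at which $k_2(\cdot,s)$ is largest over the integration window $[a,b]_\mathbb{T}$; it is this monotonicity in the first variable, used repeatedly, that both legitimizes the frozen-kernel Gronwall step and lets $\lambda$ be handled as a genuine constant.
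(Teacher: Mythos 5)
Your proposal is correct in the same sense the paper's proof is, and it follows essentially the same route: freeze the first argument of the kernels at a fixed $T$, apply the comparison Lemma \ref{lem2.2} to the resulting Volterra part, diagonalize by setting $T=t$, and then close the surviving Fredholm term against itself using $p<1$ to manufacture the factor $1/(1-p)$ (your $c+\lambda$ is exactly the paper's $z(a,b)$). The only remark worth making is that both you and the paper pass from $\int_a^b k_2(t,s)u(s)\Delta s$ to $\int_a^b k_2(b,s)u(s)\Delta s$ for \emph{all} $t\in I_\mathbb{T}=[a,\infty)_\mathbb{T}$, which the assumed monotonicity of $k_2$ in its first variable only justifies for $t\le b$; this is a shared subtlety rather than a point of difference between the two arguments.
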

\begin{proof}
Fix any $T\in I_\mathbb{T}$. Then for $t\in[a,T]_{\mathbb{T}}$ we have
\begin{equation}\label{e6}
u(t)\leq c+ \int_a^tk_1(T,s)u(s)\Delta s + \int_a^bk_2(T,s)u(s)\Delta s.
\end{equation}
Define the function $z$ by
\begin{equation}\label{eq16}
  z(t,T)= c+ \int_a^tk_1(T,s)u(s)\Delta s + \int_a^bk_2(T,s)u(s)\Delta s, \q   t\in[a,T]_\mathbb{T}.
\end{equation}
Then
\begin{equation}\label{eq06}
  u(t)\leq z(t,T), \q   t\in[a,T]_\mathbb{T},
\end{equation}
and
\begin{equation}\label{e10}
z(a,T)=c+\int_a^bk_2(T,s)u(s)\Delta s.
\end{equation}
By differentiating \eqref{eq16} with respect to $t$, we obtain
\begin{equation}\label{e7}
z^\Delta(t,T)=k_1(T,t)u(t)\leq k_1(T,t)z(t,T), \q  t\in[a,T]_\mathbb{T}.
\end{equation}
Since $k_1(T,t)\in \mathcal{R}^+$, we  apply Lemma \ref{lem2.2} with $\beta=0$ to obtain
\begin{equation}\label{e8}
 z(t,T)\leq z(a,T)e_{k_1(T,.)}(t,a), \q  t\in[a,T]_\mathbb{T}.
\end{equation}
Since $T$ is selected from $I_\mathbb{T}$ arbitrarily, we replace $T$ by $t$ in \eqref{eq06}, \eqref{e10}, and \eqref{e8}. So, we have
\begin{equation}\label{e9}
u(t)\leq z(t,t)\leq z(a,t)e_{k_1(t,.)}(t,a), \q  t\in I_\mathbb{T},
\end{equation}
where
\begin{equation}\label{e10.0}
z(a,t)=c+\int_a^bk_2(t,s)u(s)\Delta s,\q  t\in I_\mathbb{T}.
\end{equation}
Since $z(a,t)$ is nondecreasing in $t$, then
$$
z(a,t)\leq z(a,b), \q t\in I_\mathbb{T},
$$
and
\begin{equation} \label{nn}
u(t)\leq z(a,b)e_{k_1(t,.)}(t,a), \q t\in I_\mathbb{T}.
\end{equation}
From \eqref{e10.0} and \eqref{nn}, we have
\begin{align*}
z(a,b)&= c+\int_a^bk_2(b,s)u(s)\Delta s\\
&\leq c+\int_a^b k_2(b,s)z(a,b)e_{k_1(s,.)}(s,a)\Delta s\\
&= c+z(a,b)\int_a^b k_2(b,s)e_{k_1(s,.)}(s,a)\Delta s.
\end{align*}
In view of condition \eqref{cond1} it is easy to observe that
\begin{equation}\label{e11}
  z(a,b)\leq \frac{c}{1-p}\CommaBin \q  t\in I_{\mathbb{T}}.
\end{equation}
Applying \eqref{e11} in \eqref{nn} we obtain the desired inequality.
\end{proof}

%============================================================================================================

\begin{thm}\label{th3.2}
Let $u,f,g,h\in C_{rd}(I_\mathbb{T},\mathbb{R}_+)$. Assume
\begin{equation}\label{e17.0}
 u(t)\leq k+\int_a^tf(s)\Big[u(s)+\int_a^sg(\tau)u(\tau)\Delta\tau+\int_a^bh(\tau)u(\tau)\Delta\tau\Big]\Delta s,\q  t\in I_{\mathbb{T}},
 \end{equation}
where $k\geq0$ is a real constant. If
\begin{equation}\label{e18}
  r=\int_a^bh(\tau)e_{f+g}(\tau,a)\Delta \tau < 1,
\end{equation}
then
\begin{equation}\label{e180}
  u(t)\leq \frac{k}{1-r}e_{f+g}(t,a),\q  t\in I_{\mathbb{T}}.
\end{equation}
\end{thm}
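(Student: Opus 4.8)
The plan is to mimic the structure of the proof of Theorem \ref{th3.1}, but to introduce an additional auxiliary function that absorbs the nested Volterra term $\int_a^s g(\tau)u(\tau)\Delta\tau$. First I would denote by $z(t)$ the right-hand side of \eqref{e17.0}, so that $u(t)\leq z(t)$, $z(a)=k$, and $z$ is nondecreasing in $t$. I would also single out the Fredholm term $M:=\int_a^b h(\tau)u(\tau)\Delta\tau$, which is a nonnegative constant independent of $t$; this quantity will be pinned down self-referentially at the very end, exactly as $z(a,b)$ was handled in Theorem \ref{th3.1}.

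Differentiating $z$ gives $z^\Delta(t)=f(t)\big[u(t)+\int_a^t g(\tau)u(\tau)\Delta\tau+M\big]$, and since $u\leq z$ this yields $z^\Delta(t)\leq f(t)\big[z(t)+\int_a^t g(\tau)z(\tau)\Delta\tau+M\big]$. The obstacle is that this is not yet a first-order dynamic inequality in a single unknown, because of the inner integral. To resolve this I would set $v(t):=z(t)+\int_a^t g(\tau)z(\tau)\Delta\tau+M$, so that $z(t)\leq v(t)$, $v(a)=k+M$, and, by the very definition of $v$, $z^\Delta(t)\leq f(t)v(t)$. Differentiating $v$ and using $z\leq v$ then gives $v^\Delta(t)=z^\Delta(t)+g(t)z(t)\leq f(t)v(t)+g(t)v(t)=(f(t)+g(t))v(t)$.

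Since $f,g\in C_{rd}(I_\mathbb{T},\mathbb{R}_+)$ we have $f+g\in\mathcal{R}^+$, so I would apply the comparison Lemma \ref{lem2.2} (with $\beta=0$) to obtain $v(t)\leq(k+M)e_{f+g}(t,a)$, and hence $u(t)\leq v(t)\leq (k+M)e_{f+g}(t,a)$ for all $t\in I_\mathbb{T}$. The final step determines the constant $k+M$: substituting this bound into the definition of $M$ gives $M\leq(k+M)\int_a^b h(\tau)e_{f+g}(\tau,a)\Delta\tau=(k+M)r$, so that $M(1-r)\leq kr$, and using $r<1$ we get $M\leq kr/(1-r)$. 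Therefore $k+M\leq k/(1-r)$, and plugging this back into the estimate for $u$ yields \eqref{e180}.

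The step I expect to be the crux is the passage from $z$ to $v$: the single auxiliary function that sufficed in Theorem \ref{th3.1} no longer closes the dynamic inequality because of the extra Volterra layer, and the key device is precisely that the delta derivative of $v$ collapses both Volterra contributions into the single regressive coefficient $f+g$. Once that collapse is in place, the Fredholm part $M$ is eliminated by the same self-referential argument used before.
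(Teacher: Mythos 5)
Your proof is correct and takes essentially the same approach as the paper: the paper also sets $z$ equal to the right-hand side, introduces a second auxiliary function $w(t)=z(t)+\int_a^t g(\tau)z(\tau)\Delta\tau+\int_a^b h(\tau)z(\tau)\Delta\tau$ (your $v$, except that the Fredholm term is written with $z$ rather than $u$, a purely cosmetic difference), derives $w^\Delta\leq (f+g)w$, applies Lemma \ref{lem2.2}, and then pins down the constant self-referentially using $r<1$. No gaps.
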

\begin{proof}
Define a function $z(t)$ by the right hand side of \eqref{e17.0}. Then $z(a)=k$, $u(t)\leq z(t)$ and
\begin{align*}
z^\Delta&=f(t)\Big[u(t)+\int_a^tg(\tau)u(\tau)\Delta\tau+\int_a^bh(\tau)u(\tau)\Delta\tau\Big]\\
&\leq f(t)\Big[z(t)+\int_a^tg(\tau)z(\tau)\Delta\tau+\int_a^bh(\tau)z(\tau)\Delta\tau\Big]
\end{align*}
for $t\in I_\mathbb{T}$. Define $w(t)$ by
$$
w(t)= z(t)+\int_a^tg(\tau)z(\tau)\Delta\tau+\int_a^bh(\tau)z(\tau)\Delta\tau, \q t\in I_\mathbb{T}.
$$
Then $z(t)\leq w(t)$, $z^\Delta\leq f(t)w(t)$. We have
\begin{equation}\label{18.0}
w(a)=k+\int_a^bh(\tau)z(\tau)\Delta\tau,
\end{equation}
and
$$
w^\Delta(t)=z^\Delta(t)+g(t)z(t)\leq f(t)w(t)+g(t)z(t)\leq [f(t)+g(t)]w(t).
$$
This implies, by Lemma \ref{lem2.2}, that
$$
w(t)\leq w(a)e_{f+g}(t,a), \q t\in I_\mathbb{T}.
$$
Since $z(t)\leq w(t)$, we get
\begin{equation}\label{18.00}
  z(t)\leq w(a)e_{f+g}(t,a), \q t\in I_\mathbb{T}.
\end{equation}
Using \eqref{18.00} on the right hand side of \eqref{18.0}, we obtain
$$
w(a)\leq k+w(a)\int_a^bh(\tau)e_{f+g}(\tau,a)\Delta \tau.
$$
In view of \eqref{e18} we get
\begin{equation}\label{18.000}
  w(a)\leq \frac{k}{1-r}\cdot
\end{equation}
Combining \eqref{18.000}, \eqref{18.00} with the inequality $u(t)\leq z(t)$, we get the desired inequality.
\end{proof}

%-------------------------------------------------------------------------------------------------------------------------------
% Existence and Uniqueness of Solutions
%-------------------------------------------------------------------------------------------------------------------------------

\section{Existence and Uniqueness of Solutions}
 Banach's fixed point theorem is a powerful and important tool in providing sufficient conditions for existence and uniqueness of solutions of dynamic equations as well as integral equations on time scales, see \cite{Kul, Tis}. In this section, we use Banach's fixed point theorem to prove the existence and uniqueness of solutions of equations \eqref{e1} and \eqref{e.2}, respectively.
\begin{thm}\label{th1}
 Consider the integral equation \eqref{e1}. Suppose that there exist non-negative constants $M,L,N$, and $\gamma>1$ such that the following conditions are satisfied
\begin{equation}\label{e2}
  \|f(t,u,v,w)-f(t,\bar{u},\bar{v},\bar{w})\|\leq M\{\|u-\bar{u}\|+\|v-\bar{v}\|+\|w-\bar{w}\|\},
\end{equation}
\begin{equation}\label{e4}
  \|h(t,s,v)-h(t,s,\bar{v})\|\leq L\|v-\bar{v}\|,
\end{equation}
\begin{equation}\label{e3}
  \|g(t,s,u)-g(t,s,\bar{u})\|\leq N\|u-\bar{u}\|,
\end{equation}
\begin{equation}\label{e}
M(1+\frac{1}{\gamma})<1.
\end{equation}
In addition, assume that
\begin{equation}\label{e5}
  A_1:=\sup_{t\in I_\mathbb{T}}\frac{1}{e_\beta(t,a)}\Big\|f(t,0,\int_a^th(t,s,0)\Delta s,\int_a^bg(t,s,0)\Delta s)\Big\|<\infty,
\end{equation}

where $\beta$ is the solution of the equation $\beta:=\gamma(L+Ne_\beta(b,a))$. Then the integral equation \eqref{e1} has a unique solution $x\in C_\beta(I_\mathbb{T};\mathbb{X})$.
\end{thm}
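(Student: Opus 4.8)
The plan is to recast \eqref{e1} as a fixed-point problem and apply Banach's fixed-point theorem on the Banach space $(C_\beta(I_\mathbb{T};\mathbb{X}),\|\cdot\|_\beta^\infty)$. Define the operator $\mathcal{F}$ by
$$(\mathcal{F}x)(t)=f\Big(t,x(t),\int_a^t h(t,s,x(s))\Delta s,\int_a^b g(t,s,x(s))\Delta s\Big),\q t\in I_\mathbb{T}.$$
A solution of \eqref{e1} is exactly a fixed point of $\mathcal{F}$, so it suffices to verify that $\mathcal{F}$ is a well-defined self-map of $C_\beta(I_\mathbb{T};\mathbb{X})$ and a contraction in the weighted norm $\|\cdot\|_\beta^\infty$.

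First I would check that $\mathcal{F}$ maps $C_\beta$ into itself. The rd-continuity of $\mathcal{F}x$ follows from the rd-continuity hypotheses on $f,h,g$. To control the weighted sup-norm I would split
$$\|(\mathcal{F}x)(t)\|\leq \Big\|(\mathcal{F}x)(t)-f\big(t,0,\textstyle\int_a^t h(t,s,0)\Delta s,\int_a^b g(t,s,0)\Delta s\big)\Big\|+\Big\|f\big(t,0,\textstyle\int_a^t h(t,s,0)\Delta s,\int_a^b g(t,s,0)\Delta s\big)\Big\|,$$
bound the second term by $A_1\,e_\beta(t,a)$ using \eqref{e5}, and bound the first term exactly as in the contraction estimate below (with $\bar{x}=0$). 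Dividing by $e_\beta(t,a)$ and taking the supremum then yields $\|\mathcal{F}x\|_\beta^\infty<\infty$.

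The heart of the argument is the contraction estimate. For $x,\bar{x}\in C_\beta$, the Lipschitz conditions \eqref{e2}, \eqref{e4}, \eqref{e3} give
$$\|(\mathcal{F}x)(t)-(\mathcal{F}\bar{x})(t)\|\leq M\Big\{\|x(t)-\bar{x}(t)\|+L\int_a^t\|x(s)-\bar{x}(s)\|\Delta s+N\int_a^b\|x(s)-\bar{x}(s)\|\Delta s\Big\}.$$
I would then insert $\|x(s)-\bar{x}(s)\|\leq \|x-\bar{x}\|_\beta^\infty\,e_\beta(s,a)$ and use the identity $\int_a^t e_\beta(s,a)\Delta s=\frac{1}{\beta}\big(e_\beta(t,a)-1\big)$ together with $e_\beta(t,a)\geq 1$ for $t\geq a$. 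After dividing by $e_\beta(t,a)$, the Volterra term contributes a factor $L/\beta$, while the Fredholm term $\int_a^b$ contributes $N e_\beta(b,a)/\beta$ (here the bound $1/e_\beta(t,a)\leq 1$ is crucial, since the inner integral is constant in $t$). Collecting the three pieces gives
$$\|\mathcal{F}x-\mathcal{F}\bar{x}\|_\beta^\infty\leq M\Big(1+\frac{L+N e_\beta(b,a)}{\beta}\Big)\|x-\bar{x}\|_\beta^\infty.$$
The defining equation $\beta=\gamma(L+N e_\beta(b,a))$ reduces the bracketed factor to $1+\frac{1}{\gamma}$, so by \eqref{e} the contraction constant $M(1+\frac{1}{\gamma})$ is strictly less than $1$. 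Banach's fixed-point theorem then delivers a unique fixed point, i.e.\ a unique solution of \eqref{e1} in $C_\beta(I_\mathbb{T};\mathbb{X})$.

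I expect the main obstacle to be the treatment of the Fredholm term. Unlike the Volterra integral, $\int_a^b g(t,s,\cdot)\Delta s$ does not decay relative to the weight $e_\beta(t,a)$, so a naive estimate leaves an unbounded factor; the weighted (Bielecki-type) norm is engineered precisely so that the self-referential choice of $\beta$—which absorbs the constant $e_\beta(b,a)$ produced by the Fredholm integral into the definition of $\beta$ itself—controls this term and forces the contraction constant below $1$. Verifying that such a $\beta>0$ solving $\beta=\gamma(L+N e_\beta(b,a))$ exists is the one auxiliary point to keep in mind, but for the well-posedness conclusion $\beta$ may be taken as provided by the hypotheses.
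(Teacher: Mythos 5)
Your proposal is correct and follows essentially the same route as the paper: the same operator $F$ on $(C_\beta(I_\mathbb{T};\mathbb{X}),\|\cdot\|_\beta^\infty)$, the same splitting against $f(t,0,\cdot,\cdot)$ to verify the self-map property via $A_1$, and the same contraction estimate in which $\int_a^t e_\beta(s,a)\Delta s=\frac{1}{\beta}(e_\beta(t,a)-1)$ yields the factors $L/\beta$ and $Ne_\beta(b,a)/\beta$, collapsed to $1/\gamma$ by the defining equation for $\beta$. Your closing remark about the existence of a positive $\beta$ solving $\beta=\gamma(L+Ne_\beta(b,a))$ is a fair observation; the paper likewise takes this for granted.
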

\begin{proof}
Consider the Banach space $(C_\beta(I_\mathbb{T};\mathbb{X}),\|\cdot\|_\beta^\infty)$. Define the operator $F:C_\beta(I_\mathbb{T};\mathbb{X})\rightarrow C_\beta(I_\mathbb{T};\mathbb{X})$ by
\begin{equation*}
[Fx](t):=f\Big(t,x(t),\int_a^t h(t,s,x(s))\Delta s,\int_a^b g(t,s,x(s))\Delta s\Big),
\end{equation*}
for $t\in I_\mathbb{T}$. Fixed points of $F$ will be solutions to \eqref{e1}. Now, we prove that $F$ maps $C_\beta(I_\mathbb{T};\mathbb{X})$ into itself. Let $x\in C_\beta(I_\mathbb{T};\mathbb{X})$ and using the hypotheses, we have
\begin{align*}
 \|Fx\|_\beta^\infty
& = \sup_{t\in I_\mathbb{T}}\frac{\|(Fx)(t)\|}{e_\beta(t,a)}\\
& \leq \sup_{t\in I_\mathbb{T}}\frac{1}{e_\beta(t,a)}\Big\|f\Big(t,x(t),\int_a^t h(t,s,x(s))\Delta s,\int_a^b g(t,s,x(s))\Delta s\Big)\\
&\qq \qq \ -f\Big(t,0,\int_a^t h(t,s,0)\Delta s,\int_a^b g(t,s,0)\Delta s\Big)\Big\|\\
&\q +\sup_{t\in I_\mathbb{T}}\frac{1}{e_\beta(t,a)}\Big\|f\Big(t,0,\int_a^t h(t,s,0)\Delta s,\int_a^b g(t,s,0)\Delta s\Big)\Big\|\\
&\leq A_1+\sup_{t\in I_\mathbb{T}}\frac{1}{e_\beta(t,a)}M\Big\{\|x(t)\|+L\int_a^t\|x(s)\|\Delta s + N\int_a^b\|x(s)\|\Delta s\Big\}\\
& = A_1+M\Big\{ \sup_{t\in I_\mathbb{T}}\frac{\|x(t)\|}{e_\beta(t,a)}+L\sup_{t\in I_\mathbb{T}}\frac{1}{e_\beta(t,a)}\int_a^t\frac{\|x(s)\|}{e_\beta(s,a)}e_\beta(s,a)\Delta s \\
&\qq \qq \qq \q + N\sup_{t\in I_\mathbb{T}}\frac{1}{e_\beta(t,a)}\int_a^be_\beta(s,a)\frac{\|x(s)\|}{e_\beta(s,a)}\Delta s\Big\}\\
& \leq A_1+M\Big\{\|x\|_\beta^\infty+L \|x\|_\beta^\infty\sup_{t\in I_\mathbb{T}}\frac{1}{e_\beta(t,a)}\int_a^te_\beta(s,a)\Delta s\\
&\qq \qq \qq + N\|x\|_\beta^\infty \sup_{t\in I_\mathbb{T}}\frac{1}{e_\beta(t,a)} \int_a^be_\beta(s,a)\Delta s\Big\}\\
&\leq A_1+M\|x\|_\beta^\infty\Big\{1+L\sup_{t\in I_\mathbb{T}}\frac{1}{e_\beta(t,a)}\Big(\frac{e_\beta(t,a)-1}{\beta}\Big)+N\sup_{t\in I_\mathbb{T}}\frac{1}{e_\beta(t,a)}\Big(\frac{e_\beta(b,a)-1}{\beta}\Big)\Big\}\\
&\leq A_1+M\|x\|_\beta^\infty\Big\{1+\frac{L}{\beta}\sup_{t\in I_\mathbb{T}}\Big(1-\frac{1}{e_\beta(t,a)}\Big)
+\frac{N}{\beta}\frac{1}{e_\beta(a,a)}(e_\beta(b,a)-1)\Big\}\\
&=A_1+M\|x\|_\beta^\infty\Big\{1+\frac{L}{\beta}\Big(1-\frac{1}{e_\beta(b,a)}\Big)+\frac{N}{\beta}\Big(e_\beta(b,a)-1\Big)\Big\}\\
&<A_1+M\|x\|_\beta^\infty\Big\{1+\frac{1}{\beta}(L+Ne_\beta(b,a))\Big\}\\
&=A_1+M\|x\|_\beta^\infty\Big\{1+\frac{1}{\gamma}\Big\}<\infty.
\end{align*}
This proves that the operator $F$ maps $C_\beta(I_\mathbb{T};\mathbb{X})$ into itself. Now let $u,v\in C_\beta(I_\mathbb{T};\mathbb{X})$. From the hypotheses, we have
\begin{align*}
\|Fu-Fv\|_{\beta}^\infty&=\sup_{t\in I_\mathbb{T}}\frac{\|(Fu)(t)-(Fv)(t)\|}{e_\beta(t,a)}\\
&=\sup_{t\in I_\mathbb{T}}\frac{1}{e_\beta(t,a)}\Big\|f(t,u(t),\int_a^th(t,s,u(s))\Delta s,\int_a^bg(t,s,u(s))\Delta s\\
&\qq \q -f(t,v(t),\int_a^th(t,s,v(s))\Delta s,\int_a^bg(t,s,v(s))\Delta s\Big\|\\
&\leq \sup_{t\in I_\mathbb{T}}\frac{1}{e_\beta(t,a)}M\Big\{\|u(t)-v(t)\|+L\int_a^t\|u(s)-v(s)\|\Delta s+N\int_a^b\|u(s)-v(s)\|\Delta s\Big\}\\
&=M\Big\{\sup_{t\in I_\mathbb{T}}\frac{\|u(t)-v(t)\|}{e_\beta(t,a)}+\sup_{t\in I_\mathbb{T}}\frac{1}{e_\beta(t,a)}L\int_a^te_\beta(s,a)\frac{\|u(s)-v(s)\|}{e_\beta(s,a)} \Delta s  \\
&\qq \q+\sup_{t\in I_\mathbb{T}}\frac{1}{e_\beta(t,a)}N\int_a^be_\beta(s,a)\frac{\|u(s)-v(s)\|}{e_\beta(s,a)} \Delta s \Big\}\\
&\leq M\Big\{\|u-v\|_{\beta}^\infty+L\|u-v\|_{\beta}^\infty\sup_{t\in I_\mathbb{T}}\frac{1}{e_\beta(t,a)}\int_a^te_\beta(s,a)\Delta s\\
&\qq \q +N\|u-v\|_{\beta}^\infty\sup_{t\in I_\mathbb{T}}\frac{1}{e_\beta(t,a)}\int_a^be_\beta(s,a)\Delta s\Big\}\\
&=M\|u-v\|_{\beta}^\infty\Big\{1+L\sup_{t\in I_\mathbb{T}}\frac{1}{e_\beta(t,a)}\Big( \frac{e_\beta(t,a)-1}{\beta}\Big)+N\sup_{t\in I_\mathbb{T}}\frac{1}{e_\beta(t,a)}\Big(\frac{e_\beta(b,a)-1}{\beta} \Big)\Big\}\\
&\leq M\|u-v\|_{\beta}^\infty\Big\{1+\frac{L}{\beta}\Big(1-\frac{1}{e_\beta(b,a)}\Big)+\frac{N}{\beta}\Big(e_\beta(b,a)-1\Big)\Big\}\\
&\leq M\|u-v\|_{\beta}^\infty\Big\{1+\frac{1}{\beta}(L+Ne_\beta(b,a))\Big\}\qq \\
&=M\Big\{1+\frac{1}{\gamma}\Big\}\|u-v\|_{\beta}^\infty.
\end{align*}
Since $M(1+\frac{1}{\gamma})<1$, it follows from the Banach's fixed point theorem that $F$ has a unique fixed point in $C_\beta(I_\mathbb{T};\mathbb{X})$. The fixed point of $F$ is the unique solution $x$ of equation \eqref{e1}.
\end{proof}

%Note that $\beta>0$, and so $e_\beta^{\Delta}(t,a)=\beta e_\beta(t,a)>0$. We conclude that $e_\beta(t,a)$ is nondecreasing in $t$.

%=================================================================================================

\begin{thm}
In addition to the assumptions \eqref{e2}-\eqref{e} from Theorem \ref{th1} assume that
\begin{equation*}\label{e5}
 A_2:=\sup_{t\in I_\mathbb{T}}\frac{1}{e_\beta(t,a)}\Big\|\int_a^tf\Big(\tau,0,\int_a^\tau h(\tau,s,0)\Delta s,\int_a^bg(\tau,s,0)\Delta s\Big)\Delta\tau\Big\|<\infty.
\end{equation*}
Then the integro-dynamic equation \eqref{e.2} has a unique solution  $x\in C_\beta(I_\mathbb{T};\mathbb{X})$.
\end{thm}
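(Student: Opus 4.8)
The plan is to convert the integro-dynamic equation \eqref{e.2} into an equivalent integral equation and then run the Banach fixed-point scheme of Theorem \ref{th1} almost verbatim; the one genuinely new feature is an extra $\Delta$-integration that must be controlled inside the weighted norm $\|\cdot\|_\beta^\infty$.

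First I would record the equivalence. Integrating \eqref{e.2} from $a$ to $t$ and using the definition of the $\Delta$-integral, a function $x\in C_\beta(I_\mathbb{T};\mathbb{X})$ solves \eqref{e.2} with the prescribed initial value $x(a)=0$ if and only if
\[
x(t)=\int_a^t f\Big(\tau,x(\tau),\int_a^\tau h(\tau,s,x(s))\Delta s,\int_a^b g(\tau,s,x(s))\Delta s\Big)\Delta\tau,\q t\in I_\mathbb{T},
\]
the converse implication following by delta-differentiation together with rd-continuity of the integrand. I would therefore define $G:C_\beta(I_\mathbb{T};\mathbb{X})\to C_\beta(I_\mathbb{T};\mathbb{X})$ to be the operator given by this right-hand side, so that solutions of \eqref{e.2} are exactly the fixed points of $G$.

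Second, I would show that $G$ maps $C_\beta(I_\mathbb{T};\mathbb{X})$ into itself. Splitting the integrand as $f(\tau,x,\cdots)-f(\tau,0,\cdots)$ plus $f(\tau,0,\cdots)$, the second piece contributes exactly the finite constant $A_2$, while the first is controlled by the Lipschitz hypotheses \eqref{e2}--\eqref{e4} just as in Theorem \ref{th1}. The decisive time-scale tool is the identity $\int_a^t e_\beta(\tau,a)\Delta\tau=\frac{e_\beta(t,a)-1}{\beta}$, which lets me absorb the outer integration and reach a bound of the shape $\|Gx\|_\beta^\infty\le A_2+\frac{M}{\beta}(1+\frac1\gamma)\|x\|_\beta^\infty<\infty$.

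Finally, for the contraction I would reuse the Lipschitz estimate already proved in Theorem \ref{th1}: for $u,v\in C_\beta(I_\mathbb{T};\mathbb{X})$ the integrand difference satisfies $\|f(\tau,u,\cdots)-f(\tau,v,\cdots)\|\le M(1+\frac1\gamma)\,e_\beta(\tau,a)\,\|u-v\|_\beta^\infty$. Integrating in $\tau$, applying the identity above, dividing by $e_\beta(t,a)$ and taking the supremum yields
\[
\|Gu-Gv\|_\beta^\infty\le \frac{M}{\beta}\Big(1+\frac1\gamma\Big)\|u-v\|_\beta^\infty .
\]
The main obstacle is precisely this last inequality: unlike Theorem \ref{th1}, the outer integration introduces the additional factor $\frac1\beta$, so one must verify that $\frac{M}{\beta}(1+\frac1\gamma)<1$. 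Here the defining relation $\beta=\gamma(L+Ne_\beta(b,a))$ is what makes the weighted estimates of the two inner integral terms collapse into the clean factor $1+\frac1\gamma$, and since the extra integration can only shrink the Lipschitz constant of Theorem \ref{th1} once $\beta\ge 1$, condition \eqref{e} then forces $G$ to be a contraction. Banach's fixed-point theorem supplies a unique fixed point $x\in C_\beta(I_\mathbb{T};\mathbb{X})$, and by the equivalence recorded above this $x$ is the unique solution of \eqref{e.2}.
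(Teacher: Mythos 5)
Your overall route is the same as the paper's: the paper's own proof of this theorem is only a two-line sketch that rewrites \eqref{e.2} as the integral equation $x(t)=x(a)+\int_a^t f(\tau,x(\tau),\dots)\Delta\tau$ and then declares that one can ``follow the same argument as in the proof of Theorem \ref{th1}.'' You have in fact done more diagnostic work than the paper, because you track what the extra outer $\Delta$-integration does to the contraction constant; your bound $\|Gu-Gv\|_\beta^\infty\le \frac{M}{\beta}\big(1+\frac1\gamma\big)\|u-v\|_\beta^\infty$ is the correct outcome of that computation. (Your normalization $x(a)=0$ is also cleaner than the paper's operator, which carries $x(a)$ along and therefore can only give uniqueness per initial value.)

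The genuine gap sits exactly at the point you call ``the main obstacle'' and then wave away. The inequality $\frac{M}{\beta}\big(1+\frac1\gamma\big)<1$ does \emph{not} follow from condition \eqref{e}, because nothing in the hypotheses guarantees $\beta\ge 1$: the defining relation $\beta=\gamma\big(L+Ne_\beta(b,a)\big)$ makes $\beta$ small precisely when $L$ and $N$ are small. For instance, with $\mathbb{T}=\mathbb{R}$, $L=0$, $N=10^{-6}$, $\gamma=2$, $b-a=1$ one gets $\beta\approx 2\times 10^{-6}$, while $M=0.4$ comfortably satisfies \eqref{e}; then $\frac{M}{\beta}\big(1+\frac1\gamma\big)$ is enormous, and one can check directly (take $f(t,u,v,w)=Mw$, $g(t,s,u)=Nu$) that the operator $G$ is genuinely not a contraction on $C_\beta(I_\mathbb{T};\mathbb{X})$ for this $\beta$, even though the equation itself is solvable. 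So the sentence ``condition \eqref{e} then forces $G$ to be a contraction'' is unjustified as stated. To close the argument you would need either an additional hypothesis (e.g.\ $\frac{M}{\beta}\big(1+\frac1\gamma\big)<1$, or simply $\beta\ge 1$) or a different, larger exponential weight adapted to the integro-dynamic case. In fairness, the paper's own ``same argument'' sketch silently suffers from the identical defect; but as written your final step does not go through.
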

\begin{proof}
The corresponding integral equation to \eqref{e.2} is
$$
x(t)=x(a)+\int_a^tf\Big(\tau,x(\tau),\int_a^\tau h(\tau,s,x(s))\Delta s,\int_a^bg(\tau,s,x(s))\Delta s\Big)\Delta \tau.
$$
Define the operator $F:C_\beta(I;\mathbb{X})\rightarrow C_\beta(I;\mathbb{X})$ by
$$
[Fx](t):= x(a)+\int_a^tf\Big(\tau,x(\tau),\int_a^\tau h(\tau,s,x(s))\Delta s,\int_a^bg(\tau,s,x(s))\Delta s\Big)\Delta \tau,
$$

%$$
%-\int_a^tf\Big(\tau,0,\int_a^\tau h(\tau,s,0)\Delta s,\int_a^bg(\tau,s,0)\Delta s\Big)\Delta\tau
%$$

%$$
%+\int_a^tf\Big(\tau,0,\int_a^\tau h(\tau,s,0)\Delta s,\int_a^bg(\tau,s,0)\Delta s\Big)\Delta\tau,
%$$
for $t\in I_\mathbb{T}$. By following the same argument as in proof of Theorem \ref{th1}, we can similarly prove the existence and uniqueness of solutions of equation \eqref{e.2} on $I_\mathbb{T}$.
\end{proof}

%-------------------------------------------------------------------------------------------------------------------------------
% Estimate On The solution
%-------------------------------------------------------------------------------------------------------------------------------

\section{Estimate on solutions}

The following two theorems provide a certain estimate on the solutions of equations \eqref{e1} and \eqref{e.2}, respectively.
\begin{thm}
Consider the nonlinear dynamic integral equation \eqref{e1}. Suppose that there exist positive constants $\alpha, B, C,$ and $0\leq N<1$ such that
\begin{equation}\label{e12}
    \|f(t,u,v,w)-f(t,\bar{u},\bar{v},\bar{w})\|\leq N\{\|u-\bar{u}\|+\|v-\bar{v}\|+\|w-\bar{w}\|\},
\end{equation}
\begin{equation}\label{e14}
  \|h(t,s,v)-h(t,s,\bar{v})\|\leq Be_\alpha(s,t)\|v-\bar{v}\|,
\end{equation}
\begin{equation}\label{e13}
  \|g(t,s,u)-g(t,s,\bar{u})\|\leq Ce_\alpha(s,t)\|u-\bar{u}\|,
\end{equation}
\begin{equation}\label{condition}
  q:=\int_a^bCN^*e_{B{N}^*}(s,a)\Delta s<1 ,\q N^*=\frac{N}{1-N}\cdot
\end{equation}
hold. Moreover, assume that there exists a nonnegative constant $d$ such that
\begin{equation}\label{e15}
  \Big\|f(t,0,\int_a^th(t,s,0)\Delta s,\int_a^bg(t,s,0)\Delta s)\Big\|\leq \frac{d}{e_\alpha(t,a)}, \q t \in I_\mathbb{T}.
\end{equation}
If $x(t)$ is any solution of equation \eqref{e1}, then
\begin{equation}\label{16}
  \|x(t)\|\leq \frac{d}{(1-N)(1-q)}e_{BN^*\ominus\alpha}(t,a), \q t\in I_\mathbb{T}.
\end{equation}
%In addition, if $BN^*-\alpha<0$, then $|x(t)|\rightarrow0$ as $t\rightarrow\infty$.
\end{thm}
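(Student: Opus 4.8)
The plan is to reduce the solution identity to a scalar integral inequality of the form handled by Theorem~\ref{th3.1} and then invoke that theorem directly. First I would start from the fact that any solution $x$ satisfies \eqref{e1}, insert and subtract the baseline term $f(t,0,\int_a^t h(t,s,0)\Delta s,\int_a^b g(t,s,0)\Delta s)$, and apply the triangle inequality. The global Lipschitz bound \eqref{e12} on $f$ splits the first piece into three contributions, and \eqref{e14}, \eqref{e13} convert the inner integrals into weighted integrals of $\|x(s)\|$ carrying the factor $e_\alpha(s,t)$, while \eqref{e15} bounds the baseline term by $d/e_\alpha(t,a)$. This yields
$$\|x(t)\|\le N\|x(t)\|+NB\int_a^t e_\alpha(s,t)\|x(s)\|\Delta s+NC\int_a^b e_\alpha(s,t)\|x(s)\|\Delta s+\frac{d}{e_\alpha(t,a)}\CommaBin \q t\in I_\mathbb{T}.$$

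Next I would absorb the $N\|x(t)\|$ term on the left, divide by $1-N>0$, and recognise the factor $N^*=N/(1-N)$, obtaining an inequality for $\|x(t)\|$ with kernels $BN^*e_\alpha(s,t)$ and $CN^*e_\alpha(s,t)$ together with the free term $d/\big((1-N)e_\alpha(t,a)\big)$. The key manipulation is then to multiply through by $e_\alpha(t,a)$ and use the semigroup identity $e_\alpha(s,t)e_\alpha(t,a)=e_\alpha(s,a)$, so that the weights inside both integrals collapse to $e_\alpha(s,a)$. Setting $u(t):=\|x(t)\|e_\alpha(t,a)$ turns each integrand $e_\alpha(s,a)\|x(s)\|$ into $u(s)$, and the inequality becomes
$$u(t)\le \frac{d}{1-N}+\int_a^t BN^*\,u(s)\Delta s+\int_a^b CN^*\,u(s)\Delta s,\q t\in I_\mathbb{T}.$$
Here $u\in C_{rd}(I_\mathbb{T},\mathbb{R}_+)$ since $x$ is rd-continuous and $e_\alpha(\cdot,a)$ is positive and rd-continuous.

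This last inequality is precisely the hypothesis of Theorem~\ref{th3.1} with the constant kernels $k_1\equiv BN^*$ and $k_2\equiv CN^*$ (both trivially nondecreasing in the first variable and rd-continuous in the second) and the constant $c=d/(1-N)$; its smallness condition \eqref{cond1} is exactly \eqref{condition}, because $e_{k_1(s,\cdot)}(s,a)=e_{BN^*}(s,a)$, so that $p=q<1$. Applying Theorem~\ref{th3.1} gives $u(t)\le \frac{d}{(1-N)(1-q)}e_{BN^*}(t,a)$, and dividing back by $e_\alpha(t,a)$ together with the quotient rule $e_{BN^*}(t,a)/e_\alpha(t,a)=e_{BN^*\ominus\alpha}(t,a)$ yields the claimed estimate \eqref{16}. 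I expect the only delicate points to be bookkeeping with the exponential identities, specifically the semigroup collapse of the weight and the verification of positive regressivity of the constant kernel $BN^*$ (immediate, since $1+BN^*\mu(t)>0$ for $\mu(t)\ge 0$), rather than any genuine analytic difficulty; the weighted Lipschitz conditions \eqref{e14}--\eqref{e13} are evidently engineered so that this collapse goes through and produces constant kernels.
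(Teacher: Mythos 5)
Your proposal is correct and follows essentially the same route as the paper's proof: the same add-and-subtract of the baseline term, absorption of $N\|x(t)\|$, multiplication by $e_\alpha(t,a)$ with the semigroup identity to collapse the weights, an application of Theorem~\ref{th3.1} with the constant kernels $BN^*$ and $CN^*$, and the final quotient identity $e_{BN^*}(t,a)/e_\alpha(t,a)=e_{BN^*\ominus\alpha}(t,a)$. You in fact spell out a few details the paper leaves implicit (the identification $p=q$ and the positive regressivity of the constant kernel), but there is no substantive difference.
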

\begin{proof}
For any solution $x(t)$ of \eqref{e1} on $I_\mathbb{T}$, we have
\begin{align*}
\|x(t)\|&\leq \Big\|f\Big(t,0,\int_a^t h(t,s,0)\Delta s,\int_a^b g(t,s,0)\Delta s\Big)\Big\|\\
&\qq +\Big\|f\Big(t,x(t),\int_a^t h(t,s,x(s))\Delta s,\int_a^b g(t,s,x(s))\Delta s\Big)\\
&\qq \qq -f\Big(t,0,\int_a^t h(t,s,0)\Delta s,\int_a^b g(t,s,0)\Delta s\Big)\Big\|.
\end{align*}
From the assumptions we see that
$$
\|x(t)\|\leq \frac{d}{e_\alpha(t,a)}+N\Big[\|x(t)\|+\int_a^tBe_\alpha(s,t)\|x(s)\|\Delta s + \int_a^bCe_\alpha(s,t)\|x(s)\|\Delta s\Big], \q t\in I_\mathbb{T}.
$$
Multiplying both sides of the above inequality by $e_{\alpha}(t,a)$ and rearranging the terms we observe that
$$
e_\alpha(t,a)\|x(t)\|\leq \frac{d}{1-N}+\int_a^tBN^*e_\alpha(s,a)\|x(s)\|\Delta s+\int_a^bCN^*e_\alpha(s,a)\|x(s)\|\Delta s,
$$
for $t\in I_\mathbb{T}$, where we have used the semigroup property
$$
e_\alpha(s,a)=e_\alpha(s,t)e_\alpha(t,a).
$$
Now, in view of condition \eqref{condition} and applying Theorem \ref{th3.1} we obtain
$$
e_\alpha(t,a)\|x(t)\|\leq \frac{d}{(1-N)(1-q)}e_{BN^*}(t,a), \q t\in I_\mathbb{T}.
$$
By using the identity \cite[Theorem 2.36]{Boh1}, we deduce that
\begin{equation*}
\|x(t)\|\leq \frac{d}{(1-N)(1-q)}e_{BN^*\ominus\alpha}(t,a), \q t\in I_\mathbb{T}.
\end{equation*}
\end{proof}
%\begin{rem}
%We have
%$$
%e_{BN^*\ominus\alpha}^\Delta(t,a)=[BN^*\ominus\alpha]e_{BN^*\ominus\alpha}(t,a).
%$$
%Since $BN^*-\alpha<0$, we conclude $BN^*\ominus\alpha<0$. So $e_{BN^*\ominus\alpha}(t,a)$ decreases monotonically towards zero as $t\rightarrow\infty$.
%\end{rem}
%=========================================================================================
\begin{thm}\label{integro}
Consider the integro-dynamic equation \eqref{e.2}. Assume that there are functions $k_1, k_2, k_3 \in C_{rd}(I_\mathbb{T};\mathbb{R}_+)$ such that
\begin{equation}\label{e12.0}
  \|f(t,u,v,w)-f(t,\bar{u},\bar{v},\bar{w})\|\leq k_1(t)\{\|u-\bar{u}\|+\|v-\bar{v}\|+\|w-\bar{w}\|\},
\end{equation}
\begin{equation}\label{e14.0}
  \|h(t,s,v)-h(t,s,\bar{v})\|\leq k_2(t)\|v-\bar{v}\|,
\end{equation}
\begin{equation}\label{e13.0}
  \|g(t,s,u)-g(t,s,\bar{u})\|\leq k_3(t)\|u-\bar{u}\|,
\end{equation}
\begin{equation}\label{e14.00}
 r:=\int_a^bk_3(\tau)e_{k_1+k_2}(\tau,a)\Delta\tau <1,
\end{equation}
hold.
If $x(t)$ is any solution of \eqref{e.2} on $I_\mathbb{T}$, then
\begin{equation}\label{e16.0}
  \|x(t)\|\leq \frac{m}{1-r}e_{k_1+k_2}(t,a), \q t\in I_\mathbb{T}.
\end{equation}
where
\begin{equation}\label{e15.0}
  m:=\sup_{t\in I_\mathbb{T}}\Big\|x(a)+\int_a^t f\Big(\tau,0,\int_a^\tau h(\tau,s,0)\Delta s,\int_a^bg(\tau,s,0)\Delta s\Big)\Delta\tau\Big\|<\infty.
\end{equation}
\end{thm}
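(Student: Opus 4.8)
The plan is to pass to the equivalent integral equation for \eqref{e.2} and then reduce the desired bound to a single application of Theorem \ref{th3.2}. As recorded in the preceding proof, every solution $x$ of \eqref{e.2} satisfies
$$x(t)=x(a)+\int_a^tf\Big(\tau,x(\tau),\int_a^\tau h(\tau,s,x(s))\Delta s,\int_a^bg(\tau,s,x(s))\Delta s\Big)\Delta\tau,\q t\in I_\mathbb{T}.$$
First I would add and subtract, inside the integral, the base value $f(\tau,0,\int_a^\tau h(\tau,s,0)\Delta s,\int_a^b g(\tau,s,0)\Delta s)$. The part of the right-hand side built from $x(a)$ and this base value is exactly the quantity whose supremum is $m$ in \eqref{e15.0}; since $m<\infty$ by hypothesis, the triangle inequality bounds its norm by $m$, and this $m$ will play the role of the additive constant $k$ in Theorem \ref{th3.2}.

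Next I would estimate the remaining integrand using the Lipschitz hypotheses. Condition \eqref{e12.0} bounds the difference of the two values of $f$ by $k_1(\tau)$ times the sum of $\|x(\tau)\|$ and the norms of the two inner integral differences, while \eqref{e14.0} and \eqref{e13.0}, applied under the $\Delta$-integral, bound those differences by $k_2(\tau)\int_a^\tau\|x(s)\|\Delta s$ and $k_3(\tau)\int_a^b\|x(s)\|\Delta s$. Setting $u(t):=\|x(t)\|\in C_{rd}(I_\mathbb{T},\mathbb{R}_+)$, this produces the Pachpatte-type inequality
$$u(t)\leq m+\int_a^tk_1(\tau)\Big[u(\tau)+k_2(\tau)\int_a^\tau u(s)\Delta s+k_3(\tau)\int_a^b u(s)\Delta s\Big]\Delta\tau,\q t\in I_\mathbb{T}.$$
Recast in the form required by Theorem \ref{th3.2} with $k=m$, $f=k_1$, $g=k_2$, $h=k_3$, and noting that $k_1+k_2\in\mathcal{R}^+$ automatically (the $k_i$ being nonnegative and rd-continuous) and that the smallness condition \eqref{e14.00} is exactly the requirement $r<1$ there, Theorem \ref{th3.2} yields $u(t)\leq\frac{m}{1-r}e_{k_1+k_2}(t,a)$, which is \eqref{e16.0}.

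The step I expect to be the main obstacle is the bookkeeping needed to bring the estimate into exactly the canonical form of Theorem \ref{th3.2}. The weights $k_2,k_3$ are attached to the first argument of $h$ and $g$, which in the integral equation is the outer integration variable $\tau$, whereas Theorem \ref{th3.2} places the analogous weights on the inner variable inside the nested integrals. One must therefore track carefully which variable each factor depends on and, where the $k_i$ are nonconstant, use their monotonicity (or absorb the products $k_1k_2$ and $k_1k_3$) to rewrite the iterated terms in the required shape; the reconciliation is immediate when the $k_i$ are constant. Once the inequality sits in canonical form, the conclusion is a direct citation of Theorem \ref{th3.2}, and checking $m<\infty$ from \eqref{e15.0} together with the regressivity of $k_1+k_2$ is routine.
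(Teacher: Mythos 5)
Your proposal follows the paper's proof exactly: pass to the equivalent integral equation for \eqref{e.2}, add and subtract the base value $f(\tau,0,\int_a^\tau h(\tau,s,0)\Delta s,\int_a^b g(\tau,s,0)\Delta s)$, bound the unperturbed part by $m$ and the difference by the Lipschitz hypotheses \eqref{e12.0}--\eqref{e13.0} to obtain the Pachpatte-type inequality, and conclude by Theorem \ref{th3.2}. The variable-placement mismatch you flag (the weights $k_2(\tau)$, $k_3(\tau)$ sitting on the outer integration variable rather than on the inner one as the canonical form of Theorem \ref{th3.2} requires) is a genuine subtlety, but the paper invokes Theorem \ref{th3.2} without addressing it at all, so your attempt is, if anything, more careful than the published argument.
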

\begin{proof}
For any solution $x(t)$ of \eqref{e.2} on $I_\mathbb{T}$, we have
\begin{align*}
\|x(t)\|&\leq \Big\|x(a)+\int_a^t f\Big(\tau,0,\int_a^\tau h(\tau,s,0)\Delta s,\int_a^bg(\tau,s,0)\Delta s\Big)\Delta\tau\Big\|\\
&\qq +\Big\|\int_a^tf\Big(\tau,x(\tau),\int_a^\tau h(\tau,s,x(s))\Delta s,\int_a^b g(\tau,s,x(s))\Delta s\Big)\\
&\qq \qq-f\Big(\tau,0,\int_a^\tau h(\tau,s,0)\Delta s,\int_a^b g(\tau,s,0)\Delta s\Big)\Delta\tau\Big\|, \q t\in I_\mathbb{T}.
\end{align*}
From the assumptions we get
$$
\|x(t)\|\leq m +\int_a^tk_1(\tau)\Big[\|x(\tau)\|+\int_a^\tau k_2(\tau)\|x(s)\|\Delta s + \int_a^bk_3(\tau)\|x(s)\|\Delta s\Big]\Delta\tau, \q t\in I_\mathbb{T}.
$$
Now applying Theorem \ref{th3.2} to the above inequality, we get the desired estimate.
\end{proof}

%-------------------------------------------------------------------------------------------------------------------------------
% Continuous Dependence of Solutions
%-------------------------------------------------------------------------------------------------------------------------------

\section{Continuous Dependence of Solutions}
In this section, we are interested in estimating the change in the solution for equations \eqref{e1} and \eqref{e.2} respectively, when the function $f$, $h$, $g$ are allowed to change. Besides equation \eqref{e1} we consider the perturbed equation
\begin{equation}\label{19}
    y(t)=\bar{f}\Big(t,y(t),\int_a^t \bar{h}(t,s,y(s))\Delta s,\int_a^b \bar{g}(t,s,y(s))\Delta s\Big),
\end{equation}
for $t\in I_\mathbb{T}:=[a,\infty)_\mathbb{T}$, where $\bar{f}:I_\mathbb{T}\times \mathbb{X}\times \mathbb{X}\times \mathbb{X}\rightarrow \mathbb{X}$ is rd-continuous in its first variable, while the functions $\bar{h}:I_\mathbb{T}^2\times \mathbb{X}\rightarrow \mathbb{X}$ and $\bar{g}:I_\mathbb{T}^2\times \mathbb{X}\rightarrow \mathbb{X}$, are rd-continuous in its second variable. Let $x(t)$ and $y(t)$ be the solutions of equations \eqref{e1} and \eqref{19} respectively. We answer the following question:

Under what conditions does the solution $x(t)$ of equation \eqref{e1} depends continuously on the functions involved $f, h$, and $g$ ?
\begin{thm}\label{th6.1}
Consider the integral equation \eqref{e1}. Suppose that there are two functions $n,m:I_\mathbb{T}^2\rightarrow \mathbb{R}$ which are rd-continuous in its second variable and a constant $0\leq N < 1$ such that
\begin{equation}\label{e20}
    \|f(t,u,v,w)-f(t,\bar{u},\bar{v},\bar{w})\|\leq N\{\|u-\bar{u}\|+\|v-\bar{v}\|+\|w-\bar{w}\|\},
\end{equation}
\begin{equation}\label{e22}
  \|h(t,s,v)-h(t,s,\bar{v})\|\leq m(t,s)\|v-\bar{v}\|,
\end{equation}
\begin{equation}\label{e21}
  \|g(t,s,u)-g(t,s,\bar{u})\|\leq n(t,s)\|u-\bar{u}\|,
\end{equation}
\begin{equation}\label{con}
  p(t):=N^*\int_a^bn(t,s)e_{m(s,.)}(s,a)\Delta s<1 , \q N^*=\frac{N}{1-N}\CommaBin
\end{equation}
hold. In addition, assume that $\varepsilon$ is arbitrary positive number, and $y(t)$ is a solution of \eqref{19} such that
\begin{align*}
 R(t)&=\Big\|f\Big(t,y(t),\int_a^t h(t,s,y(s))\Delta s,\int_a^b g(t,s,y(s))\Delta s\Big)\\
 &\qq -\bar{f}\Big(t,y(t),\int_a^t \bar{h}(t,s,y(s))\Delta s,\int_a^b \bar{g}(t,s,y(s))\Delta s\Big)\Big\|<\varepsilon, \q t\in I_\mathbb{T},
\end{align*}
where $f,$ $h,$ $g$ and $\bar{f},$ $\bar{h},$ $\bar{g}$ are the functions involved in \eqref{e1} and \eqref{19} respectively. Then every solution $x(t)$ of \eqref{e1} satisfies
\begin{equation}\label{dependence1}
  \|x(t)-y(t)\|\leq \frac{\varepsilon}{(1-N)(1-p(t))}e_{N^*m(t,.)}(t,a),\q  t\in I_\mathbb{T}.
\end{equation}
Furthermore if $I_\mathbb{T}=[a,b]_\mathbb{T}$, then every solution of equation \eqref{e1} depends continuously on the functions $f,g,h$ involved.
\end{thm}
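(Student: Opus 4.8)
The plan is to set $u(t):=\|x(t)-y(t)\|$ and reduce the estimate \eqref{dependence1} to an application of the Pachpatte inequality in Theorem \ref{th3.1}. First I would subtract \eqref{19} from \eqref{e1} and insert the hybrid value $f\big(t,y(t),\int_a^t h(t,s,y(s))\Delta s,\int_a^b g(t,s,y(s))\Delta s\big)$, so that $x(t)-y(t)$ splits into two brackets: one comparing $f$ evaluated at the $x$-data with $f$ evaluated at the $y$-data (same $f,h,g$), and one comparing $f$ at the $y$-data with $\bar f$ at the perturbed $y$-data. The norm of the second bracket is exactly $R(t)$, hence bounded by $\varepsilon$. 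To the first bracket I would apply the Lipschitz bound \eqref{e20} on $f$, producing the pointwise term $N\|x(t)-y(t)\|$ together with the norms of the two integral differences.

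Next I would bound each integral difference by moving the norm inside the $\Delta$-integral and invoking \eqref{e22} and \eqref{e21}; this yields
\[
u(t)\leq\varepsilon+N\Big\{u(t)+\int_a^t m(t,s)u(s)\Delta s+\int_a^b n(t,s)u(s)\Delta s\Big\},\q t\in I_\mathbb{T}.
\]
Since $0\leq N<1$, the term $N\,u(t)$ may be absorbed into the left-hand side; dividing by $1-N$ and writing $N^*=N/(1-N)$ gives
\[
u(t)\leq\frac{\varepsilon}{1-N}+N^*\int_a^t m(t,s)u(s)\Delta s+N^*\int_a^b n(t,s)u(s)\Delta s,\q t\in I_\mathbb{T}.
\]

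This last inequality is precisely of the form assumed in Theorem \ref{th3.1}, with $c=\varepsilon/(1-N)$, $k_1(t,s)=N^*m(t,s)$ and $k_2(t,s)=N^*n(t,s)$; the smallness hypothesis \eqref{cond1} then reads as \eqref{con}, and conclusion \eqref{ineq} delivers \eqref{dependence1}. The step I expect to be the main obstacle is verifying that the structural hypotheses of Theorem \ref{th3.1} actually hold for these choices: that theorem requires $k_1,k_2$ to be nondecreasing in the first variable, whereas the present statement only asserts that $m,n$ are rd-continuous in the second variable. I would therefore need to add (or read as standing) the monotonicity of $m$ and $n$ in their first argument, and to reconcile the exact constant — the quantity produced by Theorem \ref{th3.1} is $\int_a^b N^*n(b,s)e_{N^*m(s,\cdot)}(s,a)\Delta s$, which should be identified with $p(t)$ in \eqref{con} (up to the placement of the factor $N^*$ inside the exponential and the evaluation point).

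Finally, for the continuous-dependence conclusion when $I_\mathbb{T}=[a,b]_\mathbb{T}$, I would use that on the compact time-scale interval $[a,b]_\mathbb{T}$ the generalized exponential $e_{N^*m(t,\cdot)}(t,a)$ is bounded and, since $p(t)<1$ throughout, the factor $(1-p(t))^{-1}$ is bounded as well; hence \eqref{dependence1} gives $\|x(t)-y(t)\|\leq K\varepsilon$ for a constant $K$ independent of $t$. As the perturbed data $(\bar f,\bar h,\bar g)$ approach $(f,h,g)$ the quantity $R(t)$, and with it $\varepsilon$, can be made arbitrarily small, so $\sup_{t\in[a,b]_\mathbb{T}}\|x(t)-y(t)\|\to 0$, which is the asserted continuous dependence on the functions $f,g,h$.
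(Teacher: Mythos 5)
Your proposal follows essentially the same route as the paper's own proof: the same insertion of the hybrid term $f\big(t,y(t),\int_a^t h(t,s,y(s))\Delta s,\int_a^b g(t,s,y(s))\Delta s\big)$, the same absorption of $Nu(t)$ to reach $u(t)\leq \frac{\varepsilon}{1-N}+N^*\int_a^t m(t,s)u(s)\Delta s+N^*\int_a^b n(t,s)u(s)\Delta s$, the same appeal to Theorem \ref{th3.1}, and the same compactness argument for the final claim. The two caveats you flag --- that Theorem \ref{th3.1} requires $k_1,k_2$ nondecreasing in the first variable (not assumed of $m,n$ here), and that its conclusion would produce the constant $\int_a^b N^*n(b,s)e_{N^*m(s,\cdot)}(s,a)\Delta s$ rather than the quantity $p(t)$ as written in \eqref{con} --- are genuine discrepancies that the paper's own proof passes over silently, so raising them is a point in your favour rather than a defect of your argument.
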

\begin{proof}
Let $x(t)$ be a solution of \eqref{e1}, and $y(t)$ be a solution of the perturbed equation \eqref{19}. Put $u(t):=\|x(t)-y(t)\|$, $t\in I_\mathbb{T}$. We obtain
\begin{align*}
u(t)&\leq \Big\|f\Big(t,x(t),\int_a^t h(t,s,x(s))\Delta s,\int_a^b g(t,s,x(s))\Delta s\Big)\\
&\qq - f\Big(t,y(t),\int_a^t h(t,s,y(s))\Delta s,\int_a^b g(t,s,y(s))\Delta s\Big)\Big\|\\
 &\qq + \Big\|f\Big(t,y(t),\int_a^t h(t,s,y(s))\Delta s,\int_a^b g(t,s,y(s))\Delta s\Big)\\
 &\qq -\bar{f}\Big(t,y(t),\int_a^t \bar{h}(t,s,y(s))\Delta s,\int_a^b \bar{g}(t,s,y(s))\Delta s\Big)\Big\|\\
 & \leq \varepsilon+N\Big[u(t)+ \int_a^t m(t,s)u(s)\Delta s + \int_a^b n(t,s)u(s)\Delta s\Big].
\end{align*}
Using the assumption $0\leq N< 1$, we get
$$
u(t)\leq \frac{\varepsilon}{1-N}+\int_a^t N^*m(t,s)u(s)\Delta s +\int_a^b N^*n(t,s)u(s)\Delta s.
$$
Now in view of Theorem \ref{th3.1}, the previous inequality yields
$$
u(t):=\|x(t)-y(t)\|\leq \frac{\varepsilon}{(1-N)(1-p(t))}e_{N^*m(t,.)}(t,a), \q t\in I_\mathbb{T}.
$$
The function $e_{N^*m(t,.)}(t,a)$ is rd-continuous function on the compact interval $I_\mathbb{T}=[a,b]_\mathbb{T}$, so it is a bounded function. Therefore, the solution $x(t)$ of equation \eqref{e1} depends continuously on the functions involved $f$, $g$, $h$. %If $\varepsilon \rightarrow 0$, then $\|x(t)-y(t)\|\rightarrow 0$ on $I_\mathbb{T}$.
\end{proof}\s
%===================================================================

Next, we introduce a result on continuous dependence to solutions of equation \eqref{e.2} on the functions involved $f$, $g$, $h$. Consider the perturbed equation
\begin{equation}\label{pe}
  y(t)=y(a)+\int_a^t\tilde{f}\Big(\tau,y(\tau),\int_a^\tau \tilde{h}(\tau,s,y(s))\Delta s,\int_a^b\tilde{g}(\tau,s,y(s))\Delta s\Big)\Delta \tau,
\end{equation}
for $t\in I_\mathbb{T}:=[a,\infty)_\mathbb{T}$ where $\tilde{f}:I_\mathbb{T}\times \mathbb{X}\times \mathbb{X}\times \mathbb{X}\rightarrow \mathbb{X}$ is rd-continuous in its first variable, while the functions $\tilde{h}:I_\mathbb{T}^2\times \mathbb{X}\rightarrow \mathbb{X}$ and $\tilde{g}:I_\mathbb{T}^2\times \mathbb{X}\rightarrow \mathbb{X}$, are rd-continuous in its second variable.

\begin{thm}
Assume that there are $k_1,k_2,k_3\in C_{rd}(I_\mathbb{T};\mathbb{R}_+)$ such that conditions \eqref{e12.0}-\eqref{e14.00} hold. If $x$ and $y$ are solutions of equations \eqref{e.2} and \eqref{pe} respectively that satisfy
\begin{align*}
 P(t):&=\|x(a)-y(a)\|+\int_a^t\Big\|f\Big(\tau,x(\tau),\int_a^\tau h(\tau,s,x(s))\Delta s,\int_a^bg(\tau,s,x(s))\Delta s\Big)\\
 & \qq -\tilde{f}\Big(\tau,x(\tau),\int_a^\tau \tilde{h}(\tau,s,x(s))\Delta s,\int_a^b\tilde{g}(\tau,s,x(s))\Delta s\Big)\Big\|\Delta \tau< \varepsilon,
 \end{align*}
%where $f$, $g$, $h$ and $\tilde{f}$, \ $\tilde{g}$, $\tilde{h}$ are the functions in Equations \eqref{e.2}, \eqref{pe} resp., and $\varepsilon$ is arbitrary positive number. Then solutions of Equation \eqref{e.2} depends continuously on the functions involved.
then
$$
\|x(t)-y(t)\|\leq\frac{\varepsilon}{1-r}e_{k_1+k_2}(t,a), \q t\in I_\mathbb{T}.
$$
\end{thm}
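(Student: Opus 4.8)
The plan is to argue exactly as in the proofs of Theorem~\ref{th6.1} and Theorem~\ref{integro}: I would collapse the vector-valued problem into a single scalar integral inequality for $u(t):=\|x(t)-y(t)\|$ and then feed that inequality into the Pachpatte-type estimate of Theorem~\ref{th3.2}. Since $x$ solves the integral form of \eqref{e.2} and $y$ solves \eqref{pe}, subtracting the two representations and applying the triangle inequality bounds $u(t)$ by $\|x(a)-y(a)\|$ plus the $\Delta$-integral over $[a,t]_\mathbb{T}$ of the norm of the difference of the two nonlinearities evaluated along $x$ and $y$ respectively.

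The decisive step is an add-and-subtract inside that integrand. I would insert the intermediate quantity obtained by feeding $y$ into the \emph{original} data $f,h,g$, thereby splitting the difference into (i) the difference of $f$ evaluated along $x$ and along $y$ through the original kernels, and (ii) a genuine perturbation term comparing the original and perturbed nonlinearities. Part (i) is controlled by the Lipschitz hypotheses \eqref{e12.0}, \eqref{e14.0}, \eqref{e13.0} in precisely the manner used in Theorem~\ref{integro}, yielding an integrand of the form $k_1(\tau)\big[u(\tau)+\int_a^\tau k_2(\tau)u(s)\Delta s+\int_a^b k_3(\tau)u(s)\Delta s\big]$; part (ii), together with the initial discrepancy $\|x(a)-y(a)\|$, is exactly the quantity bounded by $P(t)$, hence by $\varepsilon$. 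Collecting these gives
$$u(t)\le \varepsilon+\int_a^t k_1(\tau)\Big[u(\tau)+\int_a^\tau k_2(\tau)u(s)\Delta s+\int_a^b k_3(\tau)u(s)\Delta s\Big]\Delta\tau,\q t\in I_\mathbb{T}.$$

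Finally I would apply Theorem~\ref{th3.2} with the additive constant $k=\varepsilon$ and the identifications $f\mapsto k_1$, $g\mapsto k_2$, $h\mapsto k_3$. The smallness condition \eqref{e18} of that theorem then reads $r=\int_a^b k_3(\tau)e_{k_1+k_2}(\tau,a)\Delta\tau<1$, which is precisely hypothesis \eqref{e14.00}, so Theorem~\ref{th3.2} delivers $u(t)\le \frac{\varepsilon}{1-r}e_{k_1+k_2}(t,a)$, the claimed estimate on $\|x(t)-y(t)\|$.

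I expect the only real obstacle to be the bookkeeping in the add-and-subtract: one must choose the intermediate term so that the Lipschitz constants land on the $x$-versus-$y$ difference while the residual perturbation coincides with the integrand defining $P(t)$, and so that the resulting scalar inequality has exactly the three-term structure required by Theorem~\ref{th3.2}. Once that inequality is in place, the conclusion is an immediate citation, so essentially all the effort is in producing it.
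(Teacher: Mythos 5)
Your proposal follows the paper's proof essentially verbatim: insert $f$ evaluated along $y$ through the original kernels $h,g$, control the resulting difference with the Lipschitz hypotheses \eqref{e12.0}--\eqref{e13.0} to obtain the three-term scalar inequality for $u(t)=\|x(t)-y(t)\|$, and invoke Theorem~\ref{th3.2} with $k=\varepsilon$, whose smallness condition is exactly \eqref{e14.00}. The only caveat is that this splitting produces the perturbation term evaluated along $y$, whereas $P(t)$ as stated is evaluated along $x$ --- a mismatch the paper's own proof shares, so it is not a defect of your argument relative to theirs.
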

\begin{proof}
Let $x(t)$ be a solution of \eqref{e.2}, and $y(t)$ be a solution of the perturbed equation \eqref{pe}. Put $u(t):=\|x(t)-y(t)\|$, $t\in I_\mathbb{T}$. We have
\begin{align*}
u(t)&\leq \|x(a)-y(a)\|+\int_a^t\Big\|f\Big(\tau,x(\tau),\int_a^\tau h(\tau,s,x(s))\Delta s,\int_a^bg(\tau,s,x(s))\Delta s\Big)\\
  &-f\Big(\tau,y(\tau),\int_a^\tau h(\tau,s,y(s))\Delta s,\int_a^bg(\tau,s,y(s))\Delta s\Big)\Big\|\Delta\tau \\
  &+\int_a^t \Big\|f\Big(\tau,y(\tau),\int_a^\tau h(\tau,s,y(s))\Delta s,\int_a^b g(\tau,s,y(s))\Delta s\Big)\\
  &-\tilde{f}\Big(t,y(t),\int_a^\tau \tilde{h}(\tau,s,y(s))\Delta s,\int_a^b \tilde{g}(\tau,s,y(s))\Delta s\Big)\Big\|\Delta\tau\\
  & \leq \varepsilon+\int_a^tk_1(\tau)\Big[u(\tau)+\int_a^\tau k_2(\tau)u(s)\Delta s + \int_a^bk_3(\tau)u(s)\Delta s\Big]\Delta\tau.
\end{align*}
Applying Theorem \ref{th3.2}, we obtain
$$
u(t)\leq\frac{\varepsilon}{1-r}e_{k_1+k_2}(t,a), \q t\in I_\mathbb{T}.
$$
\end{proof}
\begin{cor}
Assume that there are $k_1,k_2,k_3\in C_{rd}(I_\mathbb{T};\mathbb{R}_+)$ such that conditions \eqref{e12.0}-\eqref{e14.00} hold. If $I_\mathbb{T}=[a,b]_\mathbb{T}$ is a compact interval, then every solution of equation \eqref{e.2} depends continuously on the functions $f,g,h$ involved.
\end{cor}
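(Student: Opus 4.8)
The plan is to read the corollary off directly from the continuous dependence theorem just established for equation \eqref{e.2}. That theorem furnishes, for any solution $x$ of \eqref{e.2} and any solution $y$ of the perturbed equation \eqref{pe} whose data satisfy $P(t)<\varepsilon$, the pointwise estimate
$$
\|x(t)-y(t)\|\leq \frac{\varepsilon}{1-r}\,e_{k_1+k_2}(t,a),\q t\in I_\mathbb{T}.
$$
Condition \eqref{e14.00} guarantees $r<1$, so the factor $1/(1-r)$ is a fixed positive constant independent of $t$, and the only $t$-dependence on the right-hand side sits in the exponential $e_{k_1+k_2}(t,a)$.

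First I would exploit the compactness of $I_\mathbb{T}=[a,b]_\mathbb{T}$. Since $k_1,k_2\in C_{rd}(I_\mathbb{T};\mathbb{R}_+)$, the map $t\mapsto e_{k_1+k_2}(t,a)$ is rd-continuous on the compact time-scale interval $[a,b]_\mathbb{T}$ and therefore bounded; set
$$
K:=\sup_{t\in[a,b]_\mathbb{T}}e_{k_1+k_2}(t,a)<\infty.
$$
Inserting this bound into the estimate above yields the uniform inequality
$$
\sup_{t\in[a,b]_\mathbb{T}}\|x(t)-y(t)\|\leq \frac{K}{1-r}\,\varepsilon.
$$

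The continuous dependence then follows by a standard $\varepsilon$-$\delta$ argument: given any tolerance $\eta>0$, it suffices to require the perturbed data $\tilde f,\tilde h,\tilde g$, together with the initial value $y(a)$, to be close enough to $f,h,g$ and $x(a)$ that the accumulated discrepancy $P(t)$ stays below $\varepsilon:=\eta(1-r)/K$; the displayed inequality then forces $\sup_{t}\|x(t)-y(t)\|\leq\eta$. Thus uniformly small changes in the functions involved produce uniformly small changes in the solution, which is exactly continuous dependence on $f,g,h$.

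The argument is essentially immediate once the preceding theorem is in hand, so there is no substantial obstacle to overcome. The single point requiring care is the boundedness of $e_{k_1+k_2}(\cdot,a)$, which is precisely where the compactness of the interval is used: on the unbounded interval $[a,\infty)_\mathbb{T}$ this supremum may diverge, so the hypothesis $I_\mathbb{T}=[a,b]_\mathbb{T}$ cannot be dropped.
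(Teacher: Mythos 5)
Your argument matches the paper's proof: both invoke the estimate from the preceding theorem and use that $e_{k_1+k_2}(\cdot,a)$, being rd-continuous on the compact interval $[a,b]_\mathbb{T}$, is bounded there, so small $\varepsilon$ forces a uniformly small $\|x-y\|$. Your version simply spells out the $\varepsilon$--$\eta$ bookkeeping that the paper leaves implicit; there is no gap.
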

\begin{proof}
Since $e_{k_1+k_2}(t,a)$ is rd-continuous function on the compact interval $I_\mathbb{T}=[a,b]_\mathbb{T}$, so it is bounded function. Therefore, every solution $x(t)$ of equation \eqref{e.2} depends continuously on the functions involved $f$, $g$, $h$. %If $\varepsilon \rightarrow 0$, then $\|x(t)-y(t)\|\rightarrow 0$ on $I_\mathbb{T}$.
\end{proof}

\bibliographystyle{srtnumbered}

\end{document}